\newcommand{\cz}[2]{C_{#1}(#2)}
\newcommand{\n}[2]{N_{#1}(#2)}
\newcommand{\op}[2]{O_{#1}(#2)}
\newcommand{\oupper}[2]{O^{#1}(#2)}
\newcommand{\compp}[1]{\operatorname{comp}(#1)}
\newcommand{\comp}[2]{\operatorname{comp}_{#1}(#2)}
\newcommand{\compsol}[1]{\comp{\mathrm{sol}}{#1}}
\newcommand{\layerr}[1]{E(#1)}
\newcommand{\gfitt}[1]{F^{*}(#1)}
\newcommand{\sol}[1]{\operatorname{sol}(#1)}
\newcommand{\zz}[1]{Z(#1)}
\newcommand{\omegaone}[1]{\Omega_{1}(#1)}
\newcommand{\card}[1]{|\,#1\,|}
\newcommand{\syl}[2]{\operatorname{Syl}_{#1}(#2)}
\newcommand{\aut}[1]{\operatorname{Aut}(#1)}
\newcommand{\out}[1]{\operatorname{Out}(#1)}
\newcommand{\gen}[2]{\langle \;#1 \mid #2\; \rangle}
\newcommand{\listgen}[1]{\langle \;#1\; \rangle}
\newcommand{\listset}[1]{\{ \,#1\, \}}
\newcommand{\gf}[1]{\operatorname{GF}(#1)}
\newcommand{\gfpring}[2]{\gf{#1}[#2]}
\newcommand{\ltwo}[1]{\operatorname{L}_{2}(#1)}
\newcommand{\uthree}[1]{\operatorname{U}_{3}(#1)}
\newcommand{\sz}[1]{\operatorname{Sz}(#1)}
\newcommand{\sym}[1]{\operatorname{Sym}(#1)}
\newcommand{\badfourlist}{$\ltwo{2^{r}}$, $\ltwo{3^{r}}$, $\uthree{2^{r}}$ or $\sz{2^{r}}$}
\newcommand{\isomorphic}{\cong}
\newcommand{\normal}{\,\unlhd\,}
\newcommand{\characteristic}{\operatorname{char}}
\newcommand{\br}[1]{\overline{#1}}
\newcommand{\CiteITheoremFourOne}{\cite[Theorem~4.1]{I}}
\newcommand{\CiteISectionAQS}{\cite[\S 6]{I}}
\newcommand{\CiteILemmaTwoTwo}{\cite[Lemma~2.2]{I}}
\newcommand{\CiteILemmaSixSix}{\cite[Lemma~6.6]{I}}
\newcommand{\CiteILemmaSixEightC}{\cite[Lemma~6.8(c)]{I}}
\newcommand{\CiteIISectionPSGPS}{\cite[\S 5]{II}}
\newcommand{\CiteIILemmasSevenSixAndSevenThree}{\cite[Lemmas~7.6 and 7.3]{II}}
\newcommand{\CiteIILemmaFourOne}{\cite[Lemma~4.1]{II}}
\newcommand{\CiteIILemmaFiveFiveB}{\cite[Lemma~5.5(b)]{II}}
\newcommand{\CiteIILemmaFiveFiveC}{\cite[Lemma~5.5(c)]{II}}
\newtheorem{bigEnv}{Big Environment}
\theoremstyle{plain}
\newtheorem{bigTheorem}[bigEnv]{Theorem}
\newtheorem{bigConjecture}[bigEnv]{Conjecture}
\newtheorem{Theorem}{Theorem}[section]
\newtheorem{Lemma}[Theorem]{Lemma}
\newtheorem{Claim}{Claim}
\theoremstyle{definition}
\newtheorem*{UnnumberedDefinition}{Definition}
\theoremstyle{remark}
\numberwithin{equation}{section}
\begin{document}

\title{Primitive pairs of $K$-groups}


\author{Paul Flavell}
\address{The School of Mathematics\\University of Birmingham\\Birmingham B15 2TT\\Great Britain}
\email{P.J.Flavell@bham.ac.uk}
\thanks{A considerable portion of this research was done whilst the author was in receipt
of a Leverhulme Research Project grant and during visits to the Mathematisches Seminar,
Christian-Albrechts-Universit\"{a}t, Kiel, Germany.
The author expresses his thanks to the Leverhulme Trust for their support and
to the Mathematisches Seminar for its hospitality.}

\subjclass[2010]{Primary 20D45 20D05 20E34 }

\date{}

\begin{abstract}
    In {\em Primitive pairs of $p$-solvable groups}, J. Algebra 324 (2010) 841--859,
    the author proved a non existence theorem for certain types of amalgams of
    $p$-solvable groups in the presence of operator groups acting coprimely
    on the groups in the amalgam. An application of that work was a new proof
    of the Solvable Signalizer Functor Theorem.
    In this article, the $p$-solvable restriction will be weakened to a $K$-group
    hypothesis.
    An application of this work will be a new proof of the 
    Nonsolvable Signalizer Functor Theorem.
\end{abstract}

\maketitle
\section{Introduction}\label{intro}
The results of \cite{PFpp} will be extended from $p$-solvable groups
to $K$-groups.
Just as an application of \cite{PFpp} is a new proof of the
Solvable Signalizer Functor theorem \cite{PFsft},
an application of this work will be a new proof of
the Nonsolvable Signalizer Functor Theorem.
This work is also a continuation of \cite{I} and \cite{II}
in which the theory of automorphisms of $K$-groups is developed.

We will assume familiarity with \cite{PFpp}.
However, in order to make the statement of the main result
self contained,
we remind the reader of the relevant definitions from \cite{PFpp}.
Throughout, group will mean finite group.

\begin{UnnumberedDefinition}
    Let $M$ be a group and $p$ a prime.
    Then \emph{$M$ has characteristic $p$} if $\cz{M}{\op{p}{M}} \leq \op{p}{M}$.
\end{UnnumberedDefinition}

\begin{UnnumberedDefinition}
    Let $G$ be a group.
    A \emph{weak primitive pair for $G$} is a pair $(M_{1},M_{2})$
    of distinct nontrivial subgroups that satisfy:
    \begin{itemize}
        \item   whenever $\listset{i,j} = \listset{1,2}$ and $1 \not= K \characteristic M_{i}$
                with $K \leq M_{1} \cap M_{2}$ then
                $\n{M_{j}}{K} = M_{1} \cap M_{2}$.
    \end{itemize}
    If $p$ is a prime then the weak primitive pair has \emph{characteristic p}
    if in addition:
    \begin{itemize}
        \item   for each $i$, $M_{i}$ has characteristic $p$
                and $\op{p}{M_{i}} \leq M_{1} \cap M_{2}$.
    \end{itemize}
\end{UnnumberedDefinition}

\begin{UnnumberedDefinition}
    Suppose the group $R$ acts coprimely on the group $G$
    and that $p$ is a prime.
    Then \[
        \op{p}{G;R}
    \]
    is the intersection of all the $R$-invariant Sylow $p$-subgroups of $G$.
\end{UnnumberedDefinition}

\noindent Recall that there do exist $R$-invariant Sylow $p$-subgroups of $G$,
the set of which is permuted transitively by $\cz{G}{R}$.
Moreover, every $R$-invariant $p$-subgroup of $G$ is contained
in an $R$-invariant Sylow $p$-subgroup of $G$.
Consequently, $\op{p}{G;R}$ is the unique maximal $R\cz{G}{R}$-invariant
$p$-subgroup of $G$.

The main result of this paper is the following,
which is a generalization of \cite[Corollary~C]{PFpp}.

\begin{bigTheorem} \label{thmA}
    Let $G$ be a group and $p$ a prime.
    The following configuration is impossible:
    \begin{itemize}
        \item   $(M_{1},M_{2})$ is a weak primitive pair
                of characteristic $p$ for $G$.

        \item   $M_{1}$ and $M_{2}$ are $K$-groups.

        \item   For each $i$ there is an elementary abelian group $R_{i}$
                that acts coprimely on $M_{i}$ and $\op{p}{M_{1};R_{1}} = \op{p}{M_{2};R_{2}}$.
    \end{itemize}
\end{bigTheorem}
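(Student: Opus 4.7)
The plan is to argue by contradiction, choosing a counterexample that minimizes $\card{M_{1}} + \card{M_{2}}$. Write $D = M_{1} \cap M_{2}$ and $P = \op{p}{M_{1};R_{1}} = \op{p}{M_{2};R_{2}}$. Since the shared subgroup $P$ lies in both $M_{1}$ and $M_{2}$, and since $\op{p}{M_{i}}$ is characteristic in $M_{i}$, hence $R_{i}$-invariant, and is contained in every Sylow $p$-subgroup of $M_{i}$, we have $1 \not= \op{p}{M_{i}} \leq P \leq D$. The engine of the proof is the weak primitive pair condition: whenever $\listset{i,j} = \listset{1,2}$ and $1 \not= K \characteristic M_{i}$ with $K \leq D$, then $\n{M_{j}}{K} = D$. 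Applied even just to $K = \op{p}{M_{i}}$ this already gives strong control on $\n{M_{j}}{\op{p}{M_{i}}}$.

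My strategy is to retrace the architecture of \cite{PFpp}, but to replace every appeal to $p$-solvability with the $K$-group coprime-action machinery developed in \cite{I} and \cite{II}. Since each $M_{i}$ is a $K$-group of characteristic $p$, $\gfitt{M_{i}} = \op{p}{M_{i}}\layerr{M_{i}}$, so the only new phenomenon beyond the $p$-solvable case is the possible presence of components in $\layerr{M_{i}}$. If $\layerr{M_{i}} = 1$ for both $i$, then $M_{1}$ and $M_{2}$ are $p$-solvable and the result reduces to \cite[Corollary~C]{PFpp} via a minimality argument. Otherwise, using \CiteITheoremFourOne\ to describe the $R_{i}$-action on components and the Sylow structure arising from \CiteILemmaSixSix, I would aim to manufacture, from $\layerr{M_{i}}$ together with $\op{p}{M_{i}}$, a nontrivial characteristic subgroup $K$ of $M_{i}$ with $K \leq D$ whose normalizer in $M_{j}$ strictly exceeds $D$, contradicting the primitive pair condition.

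The main obstacle will be the components isomorphic to one of \badfourlist. These are precisely the simple $K$-groups whose coprime automorphism action lacks the clean fixed-point dichotomy exploited in the $p$-solvable proof, and they force a dedicated case analysis. For each such component $L$, I expect to extract an $R_{i}$-invariant subgroup of $L$, arising for example from an $R_{i}$-invariant parabolic via \CiteIILemmasSevenSixAndSevenThree, which when combined with $\op{p}{M_{i}}$ supplies the required characteristic subgroup of $M_{i}$; the technical fixed-point information needed to transport this through the primitive pair rigidity should be available from \CiteIILemmaFourOne, \CiteIILemmaFiveFiveB\ and \CiteIILemmaFiveFiveC. Driving this analysis uniformly to a contradiction across the four exceptional families is where the bulk of the new work will lie.
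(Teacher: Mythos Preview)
Your proposal rests on a structural misconception that invalidates the whole strategy. Since $M_{i}$ has characteristic $p$, any component $L \in \compp{M_{i}}$ centralizes $\op{p}{M_{i}}$ (because $[\ff{M_{i}},\layerr{M_{i}}]=1$), hence $L \leq \cz{M_{i}}{\op{p}{M_{i}}} \leq \op{p}{M_{i}}$; as $L$ is perfect and $\op{p}{M_{i}}$ is a $p$-group, this forces $L=1$. So $\layerr{M_{i}}=1$ automatically, and your dichotomy collapses. Worse, $\layerr{M_{i}}=1$ does \emph{not} imply $p$-solvability: take $M = V \rtimes L$ with $L$ a nonabelian simple group acting faithfully on an $\gf{p}$-module $V$; then $\op{p}{M}=V$, $\cz{M}{V}=V$, $\layerr{M}=1$, yet $M$ is not $p$-solvable. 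Thus neither branch of your plan can begin, and the appeal to \cite[Corollary~C]{PFpp} is unjustified.

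The paper's proof works at a completely different level. The simple sections that distinguish the $K$-group case from the $p$-solvable case live not in $\layerr{M_{i}}$ but in quotients such as $M_{i}/\cz{M_{i}}{V}$ for suitable modules $V$, and they are handled \emph{module-theoretically}, not by direct component analysis of $M_{i}$. Concretely, the paper shows (Theorem~\ref{m:1}) that $\op{p}{G;R}$ never contains a nearly quadratic $2F$-offender on any faithful $\gfpring{p}{RG}$-module; the four bad families enter here via the Guralnick--Malle $2F$-module bounds (Lemma~\ref{prel:3}) and the structure of $\op{p}{G;R}$ modulo $\sol{G}$ (Theorem~\ref{o:1}). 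This feeds into a pushing-up argument (Theorem~\ref{m:3}) showing that with $P=\op{p}{M_{1}}\op{p}{M_{2}}$ one has $J(P)=J(\op{p}{M_{1}})=J(\op{p}{M_{2}})$. The desired characteristic subgroup is therefore the Thompson subgroup $J(\op{p}{M_{i}})$, and the contradiction with the weak primitive pair condition is immediate. The case analysis you anticipated over \badfourlist\ is absorbed uniformly into the $2F$-offender inequality; no parabolic-by-parabolic treatment is needed.
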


\noindent As in \cite{PFpp},
the proof of Theorem~\ref{thmA} uses ideas of Meierfrankenfeld and Stellmacher
\cite{KS,MS1,MS2}.
The Corollaries stated and proved in \cite{PFpp} also remain valid
but with the $p$-solvable hypothesis replaced by a $K$-group hypothesis.

Although Theorem~\ref{thmA} is adequate for the proposed new proof of the
Nonsolvable Signalizer Functor Theorem,
it is almost certain that stronger results hold.
Trivially, the restriction that the groups $R_{i}$ be elementary abelian
is superfluous.
It is included to enable results from \cite{I} and \cite{II} to be used
and hence shorten the presentation.
More significantly,
Theorem~\ref{thmA} would be a consequence of the following:

\begin{bigConjecture}
    Let $p$ be a prime.
    For each $p$-group $P \not= 1$ there exists a characteristic subgroup
    $W(P) \not= 1$ with the property:
    whenever a group $R$ acts coprimely on the group $G$ with
    $G$ of characteristic $p$ and $P = \op{p}{G;R}$ then \[
        W(P) \normal G.
    \]
\end{bigConjecture}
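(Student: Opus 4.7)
The plan is to try the Thompson-style functor $W(P) = Z(J(P))$, where $J(P)$ is the subgroup generated by the abelian subgroups of $P$ of maximum order; this is characteristic and nontrivial. When $G$ is $p$-solvable, Glauberman's $ZJ$-theorem (applied Sylow by Sylow and together with the coprime action) establishes the desired normality, so the task is to extend this to arbitrary groups of characteristic $p$ in the presence of coprime action.

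First, in a minimal counterexample $(G,R,P)$ to the conjecture, $W(P)$ is characteristic in $P$ and $P$ is $R\cz{G}{R}$-invariant, so $W(P) \normal \n{G}{P} \geq P\,\cz{G}{R}$. Failure of normality in $G$ then exhibits an element outside $\n{G}{P}$ that moves $W(P)$. Applying the Meierfrankenfeld--Stellmacher amalgam framework to the pair of $R$-invariant $p$-local subgroups $M_{1} = \n{G}{W(P)}$ and $M_{2}$, a maximal $R$-invariant $p$-local with $M_{1} \cap M_{2} < M_{2}$, one should extract a weak primitive pair of characteristic $p$ for $G$ and verify that $P = \op{p}{M_{i};R}$ for $i = 1, 2$.

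Second, I would run a failure-of-factorization argument at $M_{2}/\op{p}{M_{2}}$: an FF-offender $A \leq P$ acting on $\op{p}{M_{2}}/\frat{\op{p}{M_{2}}}$ would force the image of $A$ in $M_{2}/\op{p}{M_{2}}$ to generate a module that contradicts the maximality in the definition of $J(P)$, unless $M_{2}/\op{p}{M_{2}}$ has a composition factor admitting FF-modules. One would then handle each such composition factor, either deriving $W(P) \normal M_{2}$ directly or replacing $W(P)$ by a Baumann-/Stellmacher-style variant that is robust across the relevant list of FF-module pairs.

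The main obstacle is twofold. Theorem~\ref{thmA} cannot be invoked, since the conjecture is stated as implying it and any such appeal would be circular. More seriously, the conjecture carries no $K$-group hypothesis on $G$, so the composition factors of $M_{2}/\op{p}{M_{2}}$ are a priori arbitrary nonabelian simple groups, and a classification-free structural theorem for FF-modules over arbitrary simple groups appears to be beyond current techniques. This is the principal reason the statement remains conjectural and is precisely why the theorem proved in this paper retains both the $K$-group hypothesis and the weak-primitive-pair restriction.
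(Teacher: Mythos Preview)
The statement you are addressing is labeled \emph{Conjecture} in the paper and the paper offers no proof of it; the only remark is that for $p>3$ it is known via Glauberman's subgroup $K^{\infty}(P)$, citing \cite{PFezj}. So there is no ``paper's own proof'' to compare against, and your proposal is not a proof either---it is an outline that, to your credit, explicitly identifies why the argument cannot be completed.

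That said, two points in your sketch deserve correction even as heuristics. First, Glauberman's $ZJ$-theorem applies to a Sylow $p$-subgroup of $G$, whereas here $P=\op{p}{G;R}$ is in general a proper subgroup of a Sylow $p$-subgroup; the relevant result in the $p$-solvable (indeed $p>3$) case is the \emph{equivariant} analogue in \cite{PFezj}, and the functor used there is $K^{\infty}$, not $Z(J(-))$. Second, the amalgam step you propose---forming $M_{1}=\n{G}{W(P)}$ and an $R$-invariant $p$-local $M_{2}$ and asserting that $(M_{1},M_{2})$ is a weak primitive pair with $\op{p}{M_{i};R}=P$---is not justified: weak primitivity is a strong condition that does not come for free from maximality of $M_{2}$, and there is no reason a priori that $\op{p}{M_{2};R}$ coincides with $P$ rather than being strictly larger. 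These are exactly the sorts of alignment conditions that Theorem~\ref{thmA} takes as \emph{hypotheses}, not conclusions.

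Your concluding paragraph is accurate: without a $K$-group hypothesis the composition factors are unconstrained, and any appeal to Theorem~\ref{thmA} would be circular. The honest summary is that the conjecture is open for $p\in\{2,3\}$, and your proposal does not change that.
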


\noindent In the case that $p>3$,
the conjecture is known to be true,
see \cite{PFezj},
the characteristic subgroup in question being
Glauberman's subgroup $K^{\infty}(P)$,
which is defined in \cite{GG}.

\section{Preliminaries} \label{prel}
\begin{Lemma}[See \CiteITheoremFourOne] \label{prel:1}
    Let $r$ be a prime and $K$ a simple $K$-group whose
    order is coprime to $r$.
    \begin{enumerate}
        \item[(a)]  The Sylow $r$-subgroups of $\aut{K}$ are cyclic.

        \item[(b)]  If $\alpha \in \aut{K}$ has order $r$ then
                    $\cz{\aut{K}}{\cz{K}{\alpha}}$ is an $r$-group.
    \end{enumerate}
\end{Lemma}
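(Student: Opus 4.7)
The plan is to reduce both parts to the classification of finite simple groups applied to $K$. Since $\operatorname{Inn}(K)\isomorphic K$ is normal in $\aut{K}$ and has order coprime to $r$, every $r$-subgroup of $\aut{K}$ intersects $\operatorname{Inn}(K)$ trivially and hence injects into $\out{K}$. Part (a) therefore reduces to showing that the Sylow $r$-subgroups of $\out{K}$ are cyclic, and (b) will follow by inspecting which $\alpha$ of order $r$ can occur and how they can be centralized.

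When $K$ is cyclic of prime order, $\aut{K}$ is cyclic and both claims are immediate. When $K$ is alternating or sporadic, $\out{K}$ is well known to have order at most $4$ and to be cyclic apart from the single exception $K\isomorphic\alt{6}$, where $\out{K}\isomorphic\mathbb{Z}_{2}\times\mathbb{Z}_{2}$. In each of these cases $|K|$ is even, so $r$ is odd, and the Sylow $r$-subgroup of $\out{K}$ is trivial; both parts then hold, (b) vacuously.

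The substantive case is $K$ of Lie type over $\gf{q}$ with $q=p^{f}$. Here $\out{K}$ has the standard decomposition $\hat{D}\rtimes(\Phi\times\Gamma)$, where $\hat{D}$ is the diagonal automorphism group, $\Phi$ is the cyclic group of field automorphisms of order $f$, and $\Gamma$ is the graph automorphism group of order at most $6$. The orders of $\hat{D}$ and $\Gamma$ divide $|K|$, so since $r\nmid|K|$ any $r$-element of $\out{K}$ projects trivially onto $\hat{D}$ and $\Gamma$ and therefore lies inside the cyclic group $\Phi$. This yields (a).

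For (b), the preceding discussion shows that modulo $\operatorname{Inn}(K)$ an element $\alpha$ of order $r$ is a power of a field (or Steinberg-type) automorphism; in particular $r\mid f$, and by Steinberg's fixed-point theorem $\cz{K}{\alpha}$ is (essentially) a group of Lie type of the same twisted type over the subfield of $\gf{q}$ of order $q^{1/r}$, a subgroup generated by root subgroups that account for nearly all of $K$. Any $x\in\cz{\aut{K}}{\cz{K}{\alpha}}$ must centralize these root subgroups, and the structure of $\out{K}$ in the Lie-type case forces $x$ to act on $K$ as an element of $\listgen{\alpha}\cdot\operatorname{Inn}(\cz{K}{\alpha})$; since $x$ centralizes $\cz{K}{\alpha}$ and the latter has trivial centre in the generic case, the inner part is trivial and $x\in\listgen{\alpha}$ is an $r$-element. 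The main obstacle is the low-rank and twisted families, where $\cz{K}{\alpha}$ can be small and an ad hoc analysis of individual groups is needed; this is where the proof of \CiteITheoremFourOne does most of its case-by-case work.
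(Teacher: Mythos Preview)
The paper does not supply its own proof of this lemma: it is quoted from the companion paper \cite{I} with the attribution ``See \CiteITheoremFourOne'' and no argument is given in the present text. So there is nothing in this paper to compare your proof against directly.

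That said, your outline for part~(a) is the standard and correct route: the reduction to $\out{K}$ via $r\nmid|\operatorname{Inn}(K)|$, the triviality of the Sylow $r$-subgroup of $\out{K}$ for alternating and sporadic $K$ (since $r$ is forced to be odd there), and in the Lie-type case the observation that the diagonal and graph pieces of $\out{K}$ have orders dividing $|K|$, leaving only the cyclic field-automorphism factor to carry $r$-elements. This is essentially how the result is proved in \cite{I} as well.

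Your argument for part~(b), however, has a genuine gap. The sentence ``the structure of $\out{K}$ in the Lie-type case forces $x$ to act on $K$ as an element of $\listgen{\alpha}\cdot\operatorname{Inn}(\cz{K}{\alpha})$'' is the whole content of the claim, and you have not justified it. Knowing that $\cz{K}{\alpha}$ is a group of Lie type over $\gf{q^{1/r}}$ does not by itself tell you that its centralizer in $\aut{K}$ is small; one needs an argument that an automorphism of $K$ centralizing all root subgroups over the subfield must be a field automorphism modulo inner, and this requires real work (e.g.\ via the action on long root elements, or explicit matrix calculations depending on the family). You concede this for the low-rank cases, but even the ``generic'' step is unproved as written. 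This is precisely the case analysis that \CiteITheoremFourOne\ carries out, so your sketch identifies the right objects but stops short of the actual proof.
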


\begin{Lemma}[See \CiteITheoremFourOne] \label{prel:2}
    Let $R$ be a group of prime order $r$ that acts nontrivially
    and coprimely on the $K$-group $G$.
    Set $K = \gfitt{G}$ and suppose that $K$ is simple.
    Let $p$ be a prime and set $P = \op{p}{G;R}$.
    The following table lists all the cases in which $P \not= 1$.
    \[\begin{array}{c|cccc}
        K               &   \out{K}     & M(K)  & \cz{K}{R}                     &   P \\ \hline
        \ltwo{2^{r}}    &   r           &   1   & 3:2      &   \card{2^{r}+1}_{p} \\
        \ltwo{3{^r}}    &   2\times r  &   2   & 2^{2}:3  &   p=2, P = \op{2}{\cz{K}{R}} \isomorphic 2^{2} \\
        \uthree{2^{r}}  &   3\times 2 \times r  &   3   &  3^{2}:Q_{8}  & p=3, P = \op{3}{\cz{K}{A}} \isomorphic 3^{2} \\
        \sz{2^{r}}      &   r   & 1 (\mbox{$2^{2}$ for $\sz{2^{3}}$})    & 5:4    & \card{2^{r} + 2^{(r-1)/2}\epsilon + 1}_{p}
    \end{array}\]
    where $\epsilon = 1$ if $r \equiv \pm 1 \bmod 8$ and $-1$ if $r \equiv \pm 3 \bmod 8$;
    also an integer on its own stands for the cyclic group of that order.

    Moreover, $P$ is abelian and a Sylow $p$-subgroup of $K$.
\end{Lemma}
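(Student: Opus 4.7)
The plan is to reduce the statement to \CiteITheoremFourOne, which presumably treats the analogous classification for $K$ in place of $G$; here we mainly need to transfer the conclusion from $K$ up to $G$.

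First, I would show that $R$ embeds faithfully in $\aut{K}$. Since $K = \gfitt{G}$ is nonabelian simple we have $\cz{G}{K} = 1$, so $G \leq \aut{K}$ and, because $\cz{G}{K} = 1$, the restriction map $\aut{G} \to \aut{K}$ is also injective. Hence $R \leq \aut{K}$ as a cyclic group of order $r$, lying inside a cyclic $r$-Sylow of $\aut{K}$ by Lemma~\ref{prel:1}(a); in particular $r$ divides $\card{\out{K}}$.

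Next, I would relate the two problems. For any $R$-invariant Sylow $p$-subgroup $Q$ of $G$, the intersection $Q \cap K$ is an $R$-invariant Sylow $p$-subgroup of $K$ (standard, since $K \normal G$), and a coprime-action argument applied to $R$ on $N_{G}(P')/P'$ for each $R$-invariant Sylow $P'$ of $K$ shows conversely that every $R$-invariant Sylow $p$-subgroup of $K$ extends to such a $Q$. Intersecting over $Q$ yields $P \cap K = \op{p}{K;R}$. If $P \cap K = 1$, then $P$ injects into $G/K \leq \out{K}$ as a nontrivial $R\cz{G}{R}$-invariant $p$-subgroup; an inspection of $\out{K}$ across the simple $K$-groups (using the cyclic $r$-structure of $\aut{K}$) rules this out, so $P \cap K \not= 1$ and \CiteITheoremFourOne\ applies to deliver the four rows of the table along with the claimed $\cz{K}{R}$.

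Finally, I would verify that $P$ is abelian and a Sylow $p$-subgroup of $K$. In each row $K$ is a simple group of Lie type of small rank and $R$ is a field automorphism of order $r$ (in particular $r$ is odd). Sylow $p$-subgroups of $K$ are then either cyclic subgroups of a nonsplit $R$-invariant maximal torus (the $\ltwo{2^{r}}$ and $\sz{2^{r}}$ cases) or elementary abelian of rank $2$, equal to $\op{p}{\cz{K}{R}}$ (the $\ltwo{3^{r}}$ and $\uthree{2^{r}}$ cases). In each case such a Sylow is abelian, $R$-invariant, and equals $P$. The principal obstacle is the inspection step excluding $P \cap K = 1$: this is a case analysis across the classification, but it is simplified by the cyclic Sylow $r$-structure of $\aut{K}$ from Lemma~\ref{prel:1}(a), and most of the real work is carried out in \CiteITheoremFourOne.
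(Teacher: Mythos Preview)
The paper gives no proof of this lemma at all; the bracketed ``See \CiteITheoremFourOne'' is the entire argument, meaning the cited theorem already establishes the result for almost simple $G$ directly. Your proposal therefore does more than the paper does, by attempting to reduce the almost simple case to a hypothetical simple-group version of \CiteITheoremFourOne. That reduction is reasonable in outline, but there is a genuine gap.

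You correctly show $P\cap K=\op{p}{K;R}$ and invoke the cited theorem to identify that subgroup as an abelian Sylow $p$-subgroup of $K$ in each of the four cases. What you never establish is that $P=P\cap K$, i.e.\ that $P\leq K$. The conclusion ``$P$ is a Sylow $p$-subgroup of $K$'' asserts containment in $K$, and this is not automatic: for $K=\ltwo{3^{r}}$ we have $p=2$ and $2\mid\card{\out{K}}$, while for $K=\uthree{2^{r}}$ we have $p=3$ and $3\mid\card{\out{K}}$, so $G/K$ can have nontrivial $p$-part. One must argue, for instance when $G=\operatorname{PGL}_{2}(3^{r})$, that every $R$-invariant Sylow $2$-subgroup of $G$ already lies in $\cz{G}{R}\cong\sym{4}$ and that their intersection is the Klein four group inside $K$; your final paragraph simply asserts ``equals $P$'' without doing this. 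The same omission undermines your earlier ``inspection'' step ruling out $P\cap K=1$: there you must range over \emph{all} simple $K$-groups admitting a coprime automorphism of order $r$ (a much longer list than the four in the table) and argue in each case that no nontrivial $R\cz{G}{R}$-invariant $p$-subgroup can live in $G/K\leq\out{K}$. That is exactly the kind of case analysis that \CiteITheoremFourOne\ is presumably packaging, so deferring to it wholesale---as the paper does---is the cleaner route.
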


\begin{Lemma}\label{prel:3}
    Let $r$ be a prime and $R$ an elementary abelian $r$-group
    that acts coprimely on the quasisimple $K$-group $G$.
    Let $p$ be a prime and suppose that $A$ is an elementary abelian $p$-group
    with $1 \not= A \leq \op{p}{G;R}$.
    Suppose that $V$ is a faithful irreducible $\gfpring{p}{RG}$-module.
    Then \[
        \card{V/\cz{V}{A}} > \card{A}^{2}.
    \]
\end{Lemma}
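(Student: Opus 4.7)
The plan is a reduction to Lemma~\ref{prel:2} followed by case-by-case inspection of the four families listed there.

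First, I record basic reductions. If $A \leq \zz{G}$, then $A$ acts on the irreducible module $V$ through scalars in $\gf{p}^{\times}$; since this group has no element of order $p$, this would force $A = 1$, contrary to hypothesis. Hence the image $\br{A}$ of $A$ in the simple $K$-group $\br{G} := G/\zz{G}$ is non-trivial. The coprime action of $R$ descends to $\br{G}$, and $\br{A}$ is a non-trivial $R$-invariant elementary abelian $p$-subgroup lying in $\op{p}{\br{G};R}$.

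Next, I reduce to the case that $R$ has order $r$. Since $R$ is elementary abelian and acts coprimely on the non-trivial $p$-subgroup $\br{A}$, a standard coprime-action argument produces a subgroup $R_{0} \leq R$ of order $r$ for which $\br{A}$ retains a non-trivial intersection with $\op{p}{\br{G};R_{0}}$. Lemma~\ref{prel:2} then applies to $(\br{G},R_{0})$: $\br{G}$ is one of $\ltwo{2^{r}}, \ltwo{3^{r}}, \uthree{2^{r}}$ or $\sz{2^{r}}$, and $\br{P} := \op{p}{\br{G};R_{0}}$ is an abelian Sylow $p$-subgroup of $\br{G}$ of the explicit form given in the table.

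With the structure pinned down, the bound becomes a direct check. By Clifford theory applied to $G \normal RG$, it suffices to verify $\card{W/\cz{W}{A}} > \card{A}^{2}$ on a single irreducible $\gfpring{p}{G}$-constituent $W$ of $V|_{G}$; the hypothesis that $V$ is faithful on $RG$, together with the observation that $p$-elements of $\zz{G}$ act trivially on any $\gf{p}$-module, forces $W$ to be faithful for a quasisimple quotient of $G$ whose centre is a $p'$-group. For each of the four families, I would invoke standard lower bounds on the $\gf{p}$-dimension of such a faithful irreducible representation and combine them with the upper bound on $\card{A}$ recovered from the structure of $\br{P}$ and $\zz{G}$ to obtain the strict inequality.

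The principal obstacle is the reduction in the second paragraph: justifying the existence of a suitable prime-order $R_{0}$ for which Lemma~\ref{prel:2} continues to pin down $\br{A}$. The more computational obstacle is the $\sz{2^{r}}$ case, where the minimal dimensions of faithful modular representations in characteristic $p$ dividing $2^{r} \pm 2^{(r-1)/2} + 1$ are the most delicate and where $\card{A}$ and $\dim_{\gf{p}} W$ depend on $r$ in a coupled way.
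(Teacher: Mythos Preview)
Your overall plan---reduce to the simple quotient, identify $\br{G}$ as one of the four families, then bound $\card{V/\cz{V}{A}}$ against $\card{A}^{2}$---is exactly the paper's strategy. Two points deserve comment.

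\textbf{The reduction to prime $R$.} The ``standard coprime-action argument'' you gesture at does not do what you need. For $R_{0}\leq R$ one has $\op{p}{\br{G};R_{0}} \leq \op{p}{\br{G};R}$ (there are at least as many $R_{0}$-invariant Sylows as $R$-invariant ones), so there is no reason a priori that $\br{A}$ should meet $\op{p}{\br{G};R_{0}}$ nontrivially. The clean route is Lemma~\ref{prel:1}(a): since $\br{G}$ is a simple $K$-group of order coprime to $r$, the Sylow $r$-subgroups of $\aut{\br{G}}$ are cyclic, so the image of $R$ in $\aut{\br{G}}$ has order $1$ or $r$. Order~$1$ forces $\op{p}{\br{G};R}=\op{p}{\br{G}}=1$, a contradiction; hence there is $R_{0}\leq R$ of order $r$ with the \emph{same} image in $\aut{\br{G}}$, and then $\op{p}{\br{G};R_{0}}=\op{p}{\br{G};R}$ outright. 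This is what the paper is using implicitly when it invokes Lemma~\ref{prel:2}.

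\textbf{The case analysis.} What you describe in your last two paragraphs---Clifford reduction to a faithful $G$-constituent, then minimal-dimension bounds for irreducible $\gf{p}$-modules of $\ltwo{2^{r}}$, $\ltwo{3^{r}}$, $\uthree{2^{r}}$, $\sz{2^{r}}$ in the relevant characteristics, played off against the Sylow sizes from the table---is precisely the content of the $2F$-module classification of Guralnick and Malle~\cite{GM}. The paper simply cites their Propositions~4.1, 4.3 and 4.6 rather than redoing the computation; your worry about the $\sz{2^{r}}$ case is legitimate and is exactly where their work is needed. So your proposal is correct in outline, but the ``computational obstacle'' you flag is already a theorem in the literature, and the paper treats it as such.
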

\begin{proof}
    Set $\br{G} = G/\zz{G}$,
    so $\br{G}$ is simple.
    By Coprime Action,
    $\cz{\br{G}}{R}= \br{\cz{G}{R}}$ so $\br{\op{p}{G;R}} \leq \op{p}{\br{G};R}$.
    Now $V$ is irreducible so $\op{p}{G} = 1$,
    whence $1 \not= \br{A} \leq \op{p}{\br{G};R}$.
    Then $\br{G}$ is one of the groups listed in Lemma~\ref{prel:2}.
    A result of Guralnick and Malle \cite{GM} completes the proof.
    In fact, only Propositions~4.1, 4.3 and 4.6 of \cite{GM} are needed.
\end{proof}

\section{The subgroups $\op{p}{G;R}$} \label{o}
The aim of this section is to prove the following:
\begin{Theorem}\label{o:1}
    Let $r$ be a prime and $R$ an elementary abelian $r$-group.
    Assume that $R$ acts coprimely on the $K$-group $G$
    and that $p$ is a prime.
    Set $P = \op{p}{G;R}$.
    \begin{enumerate}
        \item[(a)]  $P$ acts trivially on $\compsol{G}$.

        \item[(b)]  If $L \in \compsol{G}$ and $P$ acts nontrivially
                    on $L/\sol{L}$ then $L/\sol{L}$ is isomorphic to one of \badfourlist.

        \item[(c)]  $P \leq \op{\mathop{sol},E}{G}$ and $P' \leq \sol{G}$.

        \item[(d)]  Assume $\sol{G} = 1$ and $G = \listgen{P^{G}} \not= 1$.
                    Then $G$ is the direct product of nonabelian simple groups,
                    each of which is isomorphic to one of \badfourlist.
                    Moreover, $P$ is abelian and a Sylow $p$-subgroup of $G$.
    \end{enumerate}
\end{Theorem}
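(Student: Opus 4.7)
The strategy is to reduce to the setting of Lemma~\ref{prel:2} (prime-order $R$, simple $\gfitt{G}$) by passing to quotients and subnormal subgroups, and then to assemble (a)--(d) from the resulting componentwise information. Setting $\br{G}=G/\sol{G}$, coprime Sylow theory yields $\op{p}{\br{G};R}=\br{P}$, sol-components of $G$ are the preimages of components of $\br{G}$, and $\op{\mathrm{sol},E}{G}$ is the preimage of $\layerr{\br{G}}=\gfitt{\br{G}}$; it is therefore enough to prove (a)--(d) under the assumption $\sol{G}=1$, which I now impose.

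For (a), let $L\in\compp{G}$, let $\Omega$ be the $\listgen{R,P}$-orbit of $L$ in $\compp{G}$, and set $N=\listgen{\Omega}=\prod_{L'\in\Omega}L'$, a subnormal $\listgen{R,P}$-invariant subgroup. Coprime Sylow theory in $N$ gives $P\cap N=\op{p}{N;R}$, and a direct computation describes this intersection as a product over the $R$-orbits in $\Omega$: each factor sits diagonally along an $R$-transversal inside a single representative component $L_{0}$ and equals $\op{p}{L_{0};R_{L_{0}}}$. Since $P$ is a $p$-group acting on $\compp{G}$, its orbits on $\Omega$ are of $p$-power size; the $R$-orbits refining them are of $r$-power size, and $r\ne p$. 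Once one shows that $P$ preserves each $R$-orbit of components as a set, the common orbit size is forced to be $1$ and (a) follows. The crucial step is that $P$ cannot move components outside their $R$-orbit, which I would derive from the diagonal description of $\op{p}{N;R}$ combined with Lemma~\ref{prel:2}: the image of $P$ in each component is inner and abelian, incompatible with $P$ inducing an outer permutation of the factors of $N$.

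For (b), (c) and (d), assume $P$ normalises each component $L$. Choose a prime-order subgroup $R_{0}\leq R$ with $[P,R_{0}]$ active on $L$; Lemma~\ref{prel:2} applied to $R_{0}$ on $L\listgen{R_{0}}$ forces $L$ to be one of \badfourlist, giving (b), and also that $P\cap L$ is an abelian Sylow $p$-subgroup of $L$ acting by inner automorphisms. Thus in $\br{G}$, $P$ sits inside the product of its active components, hence inside $\layerr{\br{G}}$; lifting yields $P\leq\op{\mathrm{sol},E}{G}$, and since distinct components commute and each $P\cap L$ is abelian, $P'\leq\sol{G}$. Part (d) is the case $\sol{G}=1$, $G=\listgen{P^{G}}$: every component is then activated by $P$ and so is one of the bad four, and $P$, being the direct product of the abelian Sylows $P\cap L$, is itself an abelian Sylow $p$-subgroup of $G$.

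The principal obstacle is the sub-step in (a) that $P$ preserves each $R$-orbit of components. The coprime orbit-counting observation is immediate once that is known, but ruling out nontrivial permutation of factors by outer $p$-elements requires careful use of the diagonal description of $\op{p}{N;R}$ together with the Sylow rigidity in Lemma~\ref{prel:2} (and, if required, Lemma~\ref{prel:3} to exclude faithful module actions that would otherwise permit such permutations).
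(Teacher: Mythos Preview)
Your reduction to $\sol{G}=1$ is fine and matches the paper. The difficulties are in (a) and in the passage from $R$ to a prime-order subgroup.

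\textbf{Part (a).} Your orbit argument does not work as written. With $R$ normalising $P$, the $P$-orbits on $\Omega$ form a block system permuted by $R$; neither system refines the other in general, and even if $P$ preserved each $R$-orbit, a $p$-group acting on a set of $r$-power size need not act trivially (the sum of $p$-powers equalling $r^{k}$ does not force every summand to be $1$). More seriously, your proposed resolution of the ``principal obstacle'' is circular: you want to deduce that $P$ cannot permute components nontrivially from the statement that ``the image of $P$ in each component is inner and abelian'', but that innerness is essentially what (a) together with Lemma~\ref{o:2}(e) establishes, and Lemma~\ref{prel:2} only applies once you already know the action is by inner automorphisms on a simple $\gfitt{}$. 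The paper does not attempt a direct combinatorial proof of (a); it reduces to $\sol{G}=1$ and then invokes \CiteIILemmasSevenSixAndSevenThree, which do the real work. Your claim $P\cap N=\op{p}{N;R}$ also needs care: the paper proves the analogous statement for an $R$-component $K$ in Lemma~\ref{o:2}(a) by passing to the full $\cz{G}{R}$-orbit of $K$, whereas your $N$ (an $\listgen{R,P}$-orbit product) is not obviously $\cz{G}{R}$-invariant.

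\textbf{The prime-order reduction for (b)--(d).} Writing ``choose a prime-order $R_{0}\leq R$ with $[P,R_{0}]$ active on $L$'' and then applying Lemma~\ref{prel:2} skips a genuine step. You need a prime-order $R_{0}$ that acts nontrivially on $L$ \emph{and} satisfies $P\cap L\leq\op{p}{L;R_{0}}$; neither is automatic. The paper handles this inside Lemma~\ref{o:2}(b): taking $R_{\infty}$ to be the kernel of $R$ on $\compp{K}$, it uses \CiteILemmaSixEightC\ to show $R_{\infty}$ acts nontrivially on each component (this fails without the $K$-group hypothesis and the fact that $\op{p}{K;R}\not=1$ is not $R$-simple), and \CiteIILemmaFiveFiveB\ to get $\op{p}{K;R}=\op{p}{K;R_{\infty}}$. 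Only then does Lemma~\ref{prel:2} apply componentwise. Your sketch of (c) and (d) after this point is close to the paper's, but the paper routes everything through Lemma~\ref{o:2}, proving (d) by induction on the number of $R$-components and deriving (c) from (d).
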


\noindent Recall from \cite{I} that a sol-component of $G$ is a
perfect subnormal subgroup of $G$ that maps onto a
component of $G/\sol{G}$.
An $R$-component of $G$ is the subgroup generated by an orbit
of $R$ on $\compp{G}$.
The sets of sol-components and $R$-components of $G$ are
denoted by $\compsol{G}$ and $\comp{R}{G}$ respectively.
We will use the results of  \CiteISectionAQS\ that describe the
structure of $R$-simple groups and also the results
of \CiteIISectionPSGPS\ to analyze $\op{p}{G;R}$.

\begin{proof}[Proof of Theorem~\ref{o:1}(a)]
    Set $\br{G} = G/\sol{G}$.
    By Coprime Action,
    $\cz{\br{G}}{R} = \br{\cz{G}{R}}$ so $\br{P} \leq \op{p}{\br{G};R}$.
    \CiteILemmaTwoTwo\ implies that the map $\compsol{G} \longrightarrow \compp{\br{G}}$
    defined by $K \mapsto \br{K}$ is a bijection.
    Hence we may assume that $\sol{G} = 1$.
    Apply \CiteIILemmasSevenSixAndSevenThree.
\end{proof}

\begin{Lemma} \label{o:2}
    Assume the hypotheses of Theorem~\ref{o:1}.
    Suppose that $K \in \comp{R}{G}$,
    that $[K,P] \not=1$ and that $\zz{K} = 1$.
    Then:
    \begin{enumerate}
        \item[(a)]  $P \cap K = \op{p}{K;R} \not= 1$.

        \item[(b)]  Let $R_{\infty} = \ker(R \longrightarrow \sym{\compp{K}})$.
                    Then $R_{\infty}$ acts nontrivially on each component of $K$;
                    $\op{p}{K;R} = \op{p}{K;R_{\infty}}$
                    and $\cz{K}{R_{\infty}}$ normalizes $P$.

        \item[(c)]  Each component of $K$ is isomorphic to one of \badfourlist.

        \item[(d)]  $\op{p}{K;R}$ is an abelian Sylow $p$-subgroup of $K$.

        \item[(e)]  $P = \cz{P}{K} \times \op{p}{K;R}$.
    \end{enumerate}
\end{Lemma}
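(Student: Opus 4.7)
The plan is to reduce everything to the single-simple-group setting of Lemma~\ref{prel:2}, applied componentwise on $K$ with $R_{\infty}$ playing the role of the prime-order group there.

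First I would prove (c). By Theorem~\ref{o:1}(a), $P$ fixes every sol-component of $G$ setwise. Any simple nonabelian component $L$ of $G$ satisfies $L\cap\sol{G}=1$ (solvable normal subgroup of a simple nonabelian $L$), so $L$ embeds in $G/\sol{G}$ as a component and is itself a sol-component; hence $P$ normalizes each component of $K$. From $\zz{K}=1$ the central product $K$ of its components is in fact direct, $K=L_{1}\times\cdots\times L_{n}$, with each $L_{i}$ simple nonabelian and $R$ transitive on $\{L_{1},\ldots,L_{n}\}$. Picking $i$ with $[L_{i},P]\neq 1$, Theorem~\ref{o:1}(b) applied to the sol-component $L_{i}$ identifies it with one of \badfourlist, and $R$-conjugacy propagates this to every $L_{j}$.

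For (b), since $R$ is abelian and transitive on $\{L_{1},\ldots,L_{n}\}$ the action is regular, so every component-stabilizer in $R$ equals $R_{\infty}$. By Lemma~\ref{prel:1}(a) and the elementary-abelianness of $R_{\infty}$, the image of $R_{\infty}$ in $\aut{L_{i}}$ has order $1$ or $r$. The main obstacle is to rule out the order-$1$ case. If $R_{\infty}$ centralized $K$, then any $R$-invariant Sylow $p$-subgroup of $K$ would be a product of Sylows of the $L_{i}$ compatibly permuted by $R$, and as the Sylow of $L_{1}$ varies freely the intersection forces $\op{p}{K;R}=1$; since $\cz{K}{R}$ normalizes $P$ this gives $P\cap K=1$, so the remaining action $P/\cz{P}{K}\hookrightarrow\out{K}$ --- which is abelian by Theorem~\ref{o:1}(c) and fixes each $L_{i}$ --- must be analyzed using the known structure of $\out{L_{i}}$ for the four groups in (c) and the fact that $p\neq r$ (from coprimality), in order to conclude that this outer action is trivial, contradicting $[K,P]\neq 1$. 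Hence $R_{\infty}$ acts on each $L_{i}$ with image of order $r$. The equality $\op{p}{K;R}=\op{p}{K;R_{\infty}}$ and the normalization of $P$ by $\cz{K}{R_{\infty}}$ then follow by routine coprime-Sylow arguments.

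For (a), (d) and (e), I apply Lemma~\ref{prel:2} to each $L_{i}$ with the order-$r$ action of $R_{\infty}$: this yields $\op{p}{L_{i};R_{\infty}}$ as a nontrivial abelian Sylow $p$-subgroup of $L_{i}$, so their product is an abelian Sylow $p$-subgroup of $K$ equal to $\op{p}{K;R_{\infty}}=\op{p}{K;R}$, proving (d). In (a), the inclusion $P\cap K\leq\op{p}{K;R}$ is immediate from $\cz{K}{R}\leq\cz{G}{R}$ normalizing $P$; the reverse uses that $\op{p}{K;R}$ is characteristic in $K$ and that $[K,P]\neq 1$ distinguishes $K$ within its $\cz{G}{R}$-orbit of $R$-components, so $\op{p}{K;R}$ is $\cz{G}{R}$-normalized and hence lies in every $R$-invariant Sylow of $G$. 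For (e), the image of $P$ in $\aut{K}$ is a $p$-group whose inner part is exactly $P\cap K=\op{p}{K;R}$, and since the latter is a Sylow $p$-subgroup of $K$ while $\zz{K}=1$, this forces the direct product $P=\cz{P}{K}\times\op{p}{K;R}$.
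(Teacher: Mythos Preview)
Your proposal has a structural circularity: you invoke Theorem~\ref{o:1}(b) to prove part~(c) and Theorem~\ref{o:1}(c) in the argument for~(b), but in the paper's logical order both of those parts of Theorem~\ref{o:1} are proved \emph{after} Lemma~\ref{o:2} and \emph{using} Lemma~\ref{o:2}. Only Theorem~\ref{o:1}(a) is available at this point. The paper accordingly proves~(a) first (to get $\op{p}{K;R}\neq 1$), then uses the overdiagonal/underdiagonal machinery of \cite{I} to deduce from $\op{p}{K;R}\neq 1$ that $R_{\infty}$ is nontrivial on each component, and only then applies Lemma~\ref{prel:2} componentwise to get~(c) and~(d). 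Your reordering short-circuits this.

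There are also two genuine gaps in the details. In~(a), the reverse inclusion $\op{p}{K;R}\leq P$ does not follow from ``$\op{p}{K;R}$ is characteristic in $K$'' (it is not; it depends on the $R$-action), nor does $[K,P]\neq 1$ single out $K$ among its $\cz{G}{R}$-conjugates (since $P$ is $\cz{G}{R}$-invariant, the condition is preserved under such conjugation). The paper's fix is to form the product $Q=\prod_{i}\op{p}{K_{i};R}$ over the whole $\cz{G}{R}$-orbit, which is then $R\cz{G}{R}$-invariant and hence contained in $P$. In~(e), the claim that $P\cap K\in\syl{p}{K}$ forces $P$ to induce only inner automorphisms on $K$ fails precisely when $p$ divides $\card{\out{L_{i}}}$, which happens for $\ltwo{3^{r}}$ ($p=2$) and $\uthree{2^{r}}$ ($p=3$). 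The paper instead applies Lemma~\ref{prel:2} to $\br{G_{i}}=PN_{i}/\sol{PN_{i}}$ with the $R_{\infty}$-action to force $\br{P}\leq\br{N_{i}}$ directly.
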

\begin{proof}
    (a). Since $[K,P] \not= 1$, \CiteIILemmaFourOne\ implies that $P \cap K \not= 1$.
    This is in fact a consequence of Lemma~\ref{prel:1}(b).
    Clearly $P \cap K \leq \op{p}{K;R}$.
    Note that $\cz{G}{R}$ acts on $\comp{R}{G}$ by conjugation.
    Let $K_{1},\ldots,K_{n}$ be the distinct $\cz{G}{R}$-conjugates of $K$.
    Set $Q = \op{p}{K_{1};R} \times\cdots\times \op{p}{K_{n};R}$.
    For each $i$, $\cz{G}{R} \cap \n{G}{K_{i}}$ normalizes $\op{p}{K_{i};R}$.
    It follows that $Q$ is $R\cz{G}{R}$-invariant.
    Then $\op{p}{K;R} \leq Q \cap K \leq P \cap K$,
    whence $\op{p}{K;R} = P \cap K$.

    (b). \CiteILemmaSixSix\ implies that the $R\cz{K}{R}$-invariant overdiagonal
    subgroups of $K$ are $R$-simple.
    Since $\op{p}{K;R} \not= 1$ is not $R$-simple,
    it follows that $K$ possesses an $R\cz{K}{R}$-invariant underdiagonal subgroup
    so \CiteILemmaSixEightC\ implies that $R_{\infty}$ acts nontrivially on
    each component of $K$.
    \CiteIILemmaFiveFiveB\ implies $\op{p}{K;R} = \op{p}{K;R_{\infty}}$.
    Let $N_{1},\ldots,N_{m}$ be the components of $K$.
    They are permuted transitively by $R$ and $K = N_{1} \times\cdots\times N_{m}$.
    For each $i$ let $\pi_{i}:K \longrightarrow N_{i}$
    be the projection map.
    Now $\op{p}{K;R_{\infty}} = \op{p}{N_{1};R_{\infty}} \times\cdots\times \op{p}{N_{m};R_{\infty}}$
    so using \CiteIILemmaFiveFiveC\ we obtain \[
        [P,\cz{K}{R_{\infty}}] \leq \op{p}{N_{1};R_{\infty}} \times\cdots\times \op{p}{N_{m};R_{\infty}} %
                                = \op{p}{K;R_{\infty}} \leq P.
    \]
    Thus $\cz{K}{R_{\infty}}$ normalizes $P$.

    (c),(d). We have shown that
    $\op{p}{K;R} = \op{p}{N_{1};R_{\infty}} \times\cdots\times \op{p}{N_{m};R_{\infty}}$.
    Lemma~\ref{prel:2} implies that for all $i$,
    $N_{i}$ is isomorphic to \badfourlist\ and that $\op{p}{N_{i};R_{\infty}}$
    is an abelian Sylow $p$-subgroup of $N_{i}$.
    Thus $\op{p}{K;R}$ is an abelian Sylow $p$-subgroup of $K$.

    (e). For each $i$ set $G_{i} = PN_{i}$.
    Theorem~\ref{o:1}(a) implies that $G_{i}$ is a subgroup.
    Set $\br{G_{i}} = G_{i}/\sol{G_{i}}$.
    Then $\gfitt{\br{G_{i}}} = \br{N_{i}}$ is simple.
    (b) and Coprime Action imply that $\br{P} \leq \op{p}{\br{G_{i}},R_{\infty}}$
    so Lemma~\ref{prel:2} implies that $\br{P} \leq \br{N_{i}}$.
    Then $P \leq \sol{G_{i}}N_{i} \leq \cz{G_{i}}{N_{i}} \times N_{i}$.
    In particular,
    $P$ induces inner automorphisms on $N_{i}$.
    Then $P$ induces inner automorphisms on $N_{1} \times\cdots\times N_{m} = K$
    and so $P \leq \cz{G}{K} \times K$.
    The projection of $P$ into $K$ is $R\cz{K}{R}$-invariant
    and hence is contained in $\op{p}{K;R}$,
    which by (a) is contained in $P$.
    We deduce that $P$ projects onto $\op{p}{K;R}$
    and then that $P = \cz{P}{K} \times \op{p}{K;R}$.
\end{proof}

\begin{proof}[Proof of Theorem~\ref{o:1}(b)]
    Let $\br{G} = G/\sol{G}$.
    As in the proof of (a), $\br{P} \leq \op{p}{\br{G};R}$.
    Note that $P \leq \n{G}{L}$ by (a) and also that $L \cap \sol{G} = \zz{L}$.
    Since $L$ is perfect and $[L,P] \not= 1$ it follows from the
    Three Subgroups Lemma that $[L,P] \not\leq \zz{L}$.
    Then $[\br{L},\br{P}] \not= 1$.
    Hence we may assume that $\sol{G} = 1$.
    Set $K = \listgen{L^{R}} \in \comp{R}{G}$.
    Then $\zz{K} = 1$ and $[K,P] \not= 1$.
    Apply Lemma~\ref{o:2}(c).
\end{proof}

\begin{proof}[Proof of Theorem~\ref{o:1}(d)]
    Since $G = \listgen{P^{G}}$,
    (a) implies that every $R$-component of $G$ is normal in $G$.
    As $\sol{G} = 1$ we may choose $K \in \comp{R}{G}$.
    Note that $\zz{K} \leq \sol{G} = 1$.
    Lemma~\ref{o:2}(e) implies that $P = \cz{P}{K} \times (P \cap K)$,
    then $P \cap K$ is an abelian Sylow $p$-subgroup of $K$
    and that each component of $K$ is isomorphic to \badfourlist.
    Now $K \normal G = \listgen{P^{G}}$
    so $P \leq \cz{G}{K} \times K \normal G$ whence $G = \cz{G}{K} \times K$;
    $P = (P \cap \cz{G}{K}) \times (P \cap K)$;
    $P \cap \cz{G}{K} = \op{p}{\cz{G}{K};R}$
    and $\cz{G}{K} = \listgen{ (P \cap \cz{G}{K})^{\cz{G}{K}} }$.
    Apply induction.
\end{proof}

\begin{proof}[Proof of Theorem~\ref{o:1}(c)]
    As previously,
    we may assume that $\sol{G} = 1$.
    Set $G_{0} = \listgen{P^{G}} \normal G$ and suppose that $G_{0} = G$.
    By induction we have $P \leq \layerr{G_{0}} \normal \layerr{G}$
    and $P' \leq \sol{G_{0}} \leq \sol{G} = 1$.
    Hence we may assume that $\listgen{P^{G}} = G$.
    Apply (d).
\end{proof}

\section{Proof of the main theorem} \label{m}
Throughout this section, $r$ is a prime.
The reader is referred to \cite{PFpp} for the
definition of {\em nearly quadratic $2F$-offender.}

\begin{Theorem}\label{m:1}
    Suppose that $R$ is an elementary abelian $r$-group
    that acts coprimely on the $K$-group $G$,
    that $p$ is a prime and that $V$ is a
    faithful $\gfpring{p}{RG}$-module.
    Then $\op{p}{G;R}$ contains no nearly quadratic $2F$-offender for $RG$ on $V$.
\end{Theorem}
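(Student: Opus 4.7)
My plan is to argue by contradiction. Suppose $A \leq P = \op{p}{G;R}$ is a nearly quadratic $2F$-offender for $RG$ on $V$, so in particular $A$ is elementary abelian and $\card{V/\cz{V}{A}} \leq \card{A}^{2}$. The goal is to reduce to a configuration in which Lemma~\ref{prel:3} applies and delivers the reverse strict inequality $\card{V/\cz{V}{A}} > \card{A}^{2}$, producing the contradiction. Passing to a counterexample of minimal order (first minimizing $\card{G}$, then $\card{V}$) and using the standard reduction arguments for $2F$-offenders — an offender on $V$ induces an offender on a suitable irreducible composition factor, and one may replace $G$ by the normal closure of $A$ — we may assume $V$ is an irreducible $\gfpring{p}{RG}$-module and $G = \listgen{A^{G}}$; in particular $G = \listgen{P^{G}}$.

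Next, I apply Theorem~\ref{o:1}. Write $\br{G} = G/\sol{G}$. Part (a) gives $\br{P} \leq \op{p}{\br{G};R}$, and part (d), applied to the image of $\listgen{P^{G}}$ in $\br{G}$, shows that this image is a direct product of simple groups drawn from $\badfourlist$, with $\br{P}$ abelian and a Sylow $p$-subgroup. The group $R$ permutes these simple factors, and Clifford's theorem together with the coprime $R$-action controls how the irreducible $V$ decomposes with respect to this direct product. Lemma~\ref{o:2}(e) pins down how $P$ — and hence $A$ — distributes across an $R$-component $K$ via $P = \cz{P}{K} \times (P \cap K)$; this enables a reduction to the case where $\gfitt{G}$ is quasisimple and the image of $A$ in $\gfitt{G}$ is a nontrivial elementary abelian $p$-subgroup of $\op{p}{\gfitt{G};R}$.

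Lemma~\ref{prel:3} then applies directly and yields $\card{V/\cz{V}{A}} > \card{A}^{2}$, contradicting the $2F$-offender bound. The main obstacle is the middle step: the Clifford-theoretic reduction from the general $K$-group $G$ to the quasisimple situation. Specifically, one must verify that the nearly quadratic $2F$-offender property descends honestly to a single tensor factor of $V$ (rather than being spread artificially across factors), that the image of $A$ in the relevant quasisimple quotient remains nontrivial, and that the coprime $R$-action is compatible with the decomposition. The structural results of Section~\ref{o}, especially the splitting in Lemma~\ref{o:2}(e) and the fact that each possible simple factor lies on the short list $\badfourlist$, are what make this reduction tractable.
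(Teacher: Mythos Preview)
Your overall strategy --- minimal counterexample, reduce to $V$ irreducible and $G=\listgen{A^{RG}}$, then reach a quasisimple configuration where Lemma~\ref{prel:3} fires --- matches the paper's. But the heart of the proof is precisely the reduction you flag as the ``main obstacle,'' and your sketch does not close it. Two concrete problems:

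\textbf{(i) Passing to $G/\sol{G}$ loses the module.} You write $\br{G}=G/\sol{G}$ and then invoke Clifford theory for $V$ against the direct-product structure of $\br{G}$. But $V$ is a module for $G$, not for $\br{G}$; nothing so far forces $\sol{G}$ to act trivially (or even by scalars) on $V$. The paper handles this by first proving $[\sol{G},A]=1$: if not, one could have chosen a minimal $R$-invariant normal subgroup $N\leq\sol{G}$ with $[N,A]\not=1$, whence (after the $G=PN$ step below) $G$ is solvable and \cite[Theorem~B]{PFpp} already gives the contradiction. With $[\sol{G},A]=1$ and $G=\listgen{A^{RG}}$, one gets $\sol{G}=\zz{G}$, and only then is the quotient meaningful.

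\textbf{(ii) The Clifford/tensor reduction is not what the paper does, and it is genuinely delicate.} Even once $\sol{G}=\zz{G}$ and $\br{G}$ is a direct product of simple groups, verifying that a nearly quadratic $2F$-offender on $V$ descends to a single tensor factor is not routine: fixed-point dimensions under $A=A_{1}\times\cdots\times A_{m}$ on $V_{1}\otimes\cdots\otimes V_{m}$ do not multiply in the way the $2F$-inequality would require, and $V$ is only $RG$-irreducible, not $G$-irreducible. The paper bypasses Clifford theory entirely. It minimizes $\card{G}+\card{A}+\dim V+\card{R}$ and records the key inequality: for $1<B<A$,
\[
\card{\cz{V}{B}/\cz{V}{A}} \;<\; \card{A/B}^{2},
\]
obtained by dividing the offender bound for $A$ by the non-offender bound for $B$. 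It then picks a minimal $R$-invariant normal $N$ with $[N,A]\not=1$, shows $N=\oupper{p}{N}$, and --- via the $P\times Q$-Lemma applied to $U=\cz{V}{\op{p}{PN}}$ together with the displayed inequality --- forces $G=PN$. Taking $N$ to be an $R$-component makes $\br{G}$ $R$-simple; Lemma~\ref{o:2}(b) gives $P=\op{p}{G;R_{\infty}}$, and minimality of $\card{R}$ then forces $R=R_{\infty}$, so $\br{G}$ is simple and $G$ is quasisimple. Only at that point is Lemma~\ref{prel:3} invoked.

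So the missing ingredients are: the auxiliary inequality coming from minimality of $\card{A}$; the $G=PN$ argument via the $P\times Q$-Lemma; the treatment of $\sol{G}$ using the solvable case \cite[Theorem~B]{PFpp}; and the use of minimality of $\card{R}$ (not a Clifford decomposition) to collapse the direct product to a single simple factor.
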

\begin{proof}
    Assume false and let $A \leq \op{p}{G;R}$ be a counterexample.
    We may suppose that $\card{G} + \card{A} + \dim(V) + \card{R}$
    has been minimized.
    A standard reduction,
    see the proof of \cite[Theorem~B]{PFpp},
    shows that $RG$ is irreducible on $V$.
    In particular, \[
        \op{p}{G} = 1.
    \]
    Set $P = \op{p}{G;R}$.
    Now $\listgen{A^{RG}} \normal RG$
    so $\op{p}{\listgen{A^{RG}}} = 1$.
    Using the minimality of $\card{G}$ we obtain
    \begin{equation}\tag{$1$}
        G = \listgen{P^{G}} = \listgen{A^{RG}}.
    \end{equation}
    We claim that if $1 < B < A$ then
    \begin{equation}\tag{$2$}
        \card{\cz{V}{B}/\cz{V}{A}} < \card{A/B}^{2}.
    \end{equation}
    Indeed, since we are studying a counterexample we have
    $\card{V/\cz{V}{A}} \leq \card{A}^{2}$.
    On the other hand,
    the minimality of $\card{A}$ implies $\card{V/\cz{V}{B}} > \card{B}^{2}$.
    Then $(2)$ follows on division.

    Let $N$ be a minimal $R$-invariant normal subgroup of $G$
    chosen minimal subject to $[N,A] \not= 1$.
    Note that $N$ exists because $A \not\leq \op{p}{\zz{G}} = 1$.
    Suppose that $N \not= \oupper{p}{N}$.
    Then $[\oupper{p}{N},A] = 1$ and $(1)$
    implies that $\oupper{p}{N} \leq \zz{G}$.
    In particular, $N$ is nilpotent.
    Then $\op{p}{N} \leq \op{p}{G} = 1$ whence $N = \oupper{p}{N}$,
    a contradiction.
    We deduce that $N = \oupper{p}{N}$.

    We claim that $G = PN$.
    Assume false and set $G_{0} = PN$.
    Let $B = A \cap \op{p}{G_{0}}$.
    Now $[\op{p}{G_{0}},N] \leq \op{p}{N} \leq \op{p}{G} = 1$
    so using the minimality of $\card{G}$ and the fact that $[N,A] \not= 1$,
    we have $1 < B < A$.
    Set \[
        U = \cz{V}{\op{p}{G_{0}}}.
    \]
    Now $N = \oupper{p}{N}$ and $[\op{p}{G_{0}},N] = 1$
    so the \mbox{$P \times Q$}-Lemma implies that $N$ is faithful on $U$.
    As $G_{0} = PN$ it follows that $\cz{G_{0}}{U}$ is a $p$-group
    and then that \[
        \cz{G_{0}}{U} = \op{p}{G_{0}}.
    \]
    The minimality of $\card{G}$ implies that $\card{A/B}^{2} < \card{U/\cz{U}{A}}$.
    Then $(2)$ yields \[
        \card{\cz{V}{B}/\cz{V}{A}} < \card{U/\cz{U}{A}}.
    \]
    However $U \leq \cz{V}{B}$ so $\card{U/\cz{U}{A}} \leq \card{\cz{V}{B}/\cz{V}{A}}$,
    a contradiction.
    We deduce that $G = PN$.

    Suppose that $[\sol{G},A] \not= 1$.
    Then we could have chosen $N$ with $N \leq \sol{G}$,
    whence $G = P\sol{G}$ and $G$ is solvable.
    But then \cite[Theorem~B]{PFpp} supplies a contradiction.
    Thus $[\sol{G},A] = 1$ and then $(1)$ forces \[
        \sol{G} = \zz{G}.
    \]

    Set $\br{G} = G/\zz{G}$.
    Then $\br{P} = \op{p}{\br{G};R}, \sol{\br{G}} = 1$
    and $\br{G} = \listgen{\br{P}^{\br{G}}}$.
    Theorem~\ref{o:1}(d) implies that
    $\br{G}$ is the direct product of simple groups and that $\br{P}$
    is an abelian Sylow $p$-subgroup of $\br{G}$.
    Choosing $N$ to be an $R$-component of $G$ it follows that
    $\br{G}$ is $R$-simple.
    Let $R_{\infty} = \ker( R \longrightarrow \sym{\compp{G}})$.
    Lemma~\ref{o:2}(b) implies that $\br{P} = \op{p}{\br{G};R_{\infty}}$.
    Then as $\br{G} = G/\zz{G}$ we have $P = \op{p}{G;R_{\infty}}$
    and the minimality of $\card{R}$ forces $R = R_{\infty}$.
    In particular, $\br{G}$ is simple.
    We have $\br{G} = \br{G}'$ so $G = \zz{G}G' = \listgen{P^{G}}$.
    As $\zz{G}$ is a $p'$-group we see that $P \leq G'$ whence
    $G = G'$ and $G$ is quasisimple.
    Lemma~\ref{prel:3} implies that $\card{V/\cz{V}{A}} > \card{A}^{2}$,
    contrary to $A$ being a $2F$-offender for $RG$ on $V$.
    The proof is complete.
\end{proof}

\begin{Lemma}\label{m:2}
    Suppose $R$ is an elementary abelian $r$-group that acts coprimely on the $K$-group $G$.
    Suppose that $\gfitt{G} = \op{p}{G}$ for some prime $p$.
    Let $V$ be an elementary abelian normal subgroup of $\op{p}{G;R}$.
    Then either:
    \begin{itemize}
        \item   $V \leq \op{p}{G}$, or

        \item   there exists $A \leq \op{p}{G;R}$ such that $A$
                acts nontrivially and nearly quadratically on $V$ and \[
                    \card{V/\cz{V}{A}} \leq \card{A/\cz{A}{V}}^{2}.
                \]
    \end{itemize}
\end{Lemma}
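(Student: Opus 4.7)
The plan is to prove the lemma by contrapositive: assume $P := \op{p}{G;R}$ contains no subgroup $A$ with the three listed properties, and deduce $V \leq U$, where $U := \op{p}{G}$.

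First I would observe that $U \leq P$, since $U$ is contained in every $R$-invariant Sylow $p$-subgroup of $G$. The hypothesis $\gfitt{G} = U$ yields $\cz{G}{U} \leq U$, so if $V \not\leq U$ then $V \not\leq \cz{G}{U}$, giving $[V, U] \neq 1$. Hence $U$ acts nontrivially on $V$ by conjugation inside $P$. Since $U \normal G$, every subgroup of $U$ is automatically $R\cz{G}{R}$-invariant and therefore lies inside $P$; this makes $U$ the natural source from which to construct the required offender.

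Next I would extract a nearly quadratic $2F$-offender $A \leq U$ from the nontrivial action of $U$ on $V$. Two commutator facts are key: since $V \normal P \geq U$ we have $[V, U] \leq V \cap U$, and since $V$ is elementary abelian, $V \cap A$ is an abelian $A$-invariant subgroup of $A$ for any $A \leq U$. Together these give $[V, A, A] \leq V \cap A$, providing initial control on the depth of the action of $A$ on $V$. A Thompson-style replacement carried out within $U$ should then refine $A$ so that it is nearly quadratic on $V$ and satisfies the offender inequality $\card{V/\cz{V}{A}} \leq \card{A/\cz{A}{V}}^{2}$.

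The main obstacle is the simultaneous verification of these two conditions, which are typically in tension: near quadraticity pushes one to take $A$ minimal subject to $[V, A] \neq 1$, while the offender inequality requires $A$ to be large enough that $\card{A/\cz{A}{V}}$ dominates $\card{V/\cz{V}{A}}^{1/2}$. The replacement procedure developed in \cite{PFpp} should reconcile these demands and produce the required $A$. If instead no valid $A$ can be constructed inside $U$, the commutator bounds above force $[V, U] = 1$, and hence $V \leq \cz{G}{U} \leq U$, contradicting $V \not\leq U$ and completing the contrapositive.
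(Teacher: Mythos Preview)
Your outline has a genuine gap at its central step: you never actually construct the offender $A$, and the mechanism you sketch cannot produce it.

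You propose to run a Thompson-style replacement \emph{inside} $U=\op{p}{G}$. But the offender inequality in Thompson Replacement comes from comparing $\card{A}$ with $\card{V\cz{A}{V}}$ for $A\in\mathcal A(S)$, and this only yields $\card{A/\cz{A}{V}}\geq\card{V/\cz{V}{A}}$ when $V\cz{A}{V}$ is an abelian subgroup of the \emph{same} $p$-group $S$. If $S=U$ that fails precisely because you are assuming $V\not\leq U$; if $S=P$ you must first know $[V,J(P)]\neq 1$, which does not follow from $[V,U]\neq 1$. (Your side claim that ``every subgroup of $U$ is automatically $R\cz{G}{R}$-invariant'' is also false; normality of $U$ in $G$ says nothing about invariance of its arbitrary subgroups.) Your fallback, that ``the replacement procedure developed in \cite{PFpp}'' will sort this out, is not a proof: \cite[Lemma~3.1]{PFpp} is not a free-standing replacement theorem but requires $V$ to lie in a unique maximal subgroup of a carefully chosen ambient group, with $V\leq\op{p}{\cdot}$ there. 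None of that is set up in your outline.

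A symptom of the gap is that your argument never invokes the $K$-group hypothesis. In the paper's proof this hypothesis is essential: after reducing to $G=\listgen{V^{RG}}$, one uses Theorem~\ref{o:1}(d) to prove that $P\in\syl{p}{G}$ and that $\br{V}$ is normal in every Sylow $p$-subgroup of $\br{G}=G/\op{p}{G}$ containing it. Only then can one choose a minimal $\br{G_{2}}$ with $\br{V}\not\leq\op{p}{\br{G_{2}}}$, conjugate so that $P\cap G_{2}\in\syl{p}{G_{2}}$, pass to $G_{3}=\listgen{V^{G_{2}}}$, and apply Wielandt's Maximizer Lemma to place $V$ in a unique maximal subgroup $M$ of $G_{3}$ with $V\leq\op{p}{M}$. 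It is this pushing-up configuration that makes \cite[Lemma~3.1]{PFpp} applicable and produces the nearly quadratic $2F$-offender $A\leq P\cap G_{3}\leq P$. Your outline skips all of this structure, and without it there is no route from $[V,U]\neq 1$ to the required $A$.
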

\begin{proof}
    Set $P = \op{p}{G;R}$ and $G_{0} = \listgen{V^{RG}} \normal G$.
    Then $\gfitt{G_{0}} = \op{p}{G_{0}} \leq \op{p}{G}$
    and $P \cap G_{0} = \op{p}{G_{0};R}$.
    Hence we may suppose that $G = G_{0}$.
    Then \[
        G = \listgen{V^{RG}} = \listgen{P^{G}}.
    \]
    Set \[
        \br{G} = G/\op{p}{G}.
    \]
    \setcounter{Claim}{0}
    \begin{Claim} \label{m:2:Claim1}
        $P \in \syl{p}{G}$ and $\br{V}$ is normal in every Sylow $p$-subgroup
        of $\br{G}$ in which it is contained.
    \end{Claim}
    \begin{proof}
        Set $G_{1} = P\op{p.p'}{G}$.
        Then $\gfitt{G_{1}} = \op{p}{G_{1}}$ and $P \in \syl{p}{G_{1}}$,
        since $\op{p}{G} \leq P$.
        Set $\br{Y} = \op{p'}{\br{G}}$.
        Now $\br{G_{1}} = \br{P}\,\br{Y}$ and $\br{V} \normal \br{P}$.
        In particular, $\cz{\br{Y}}{\br{V}}$ acts transitively
        on the set of Sylow $p$-subgroups of $\br{G_{1}}$ that contain $\br{V}$.
        Consequently, $\br{V}$ is normal in every Sylow $p$-subgroup of
        $\br{G_{1}}$ in which it is contained.
        Hence if $G = G_{1}$ then the claim holds.

        Suppose $G_{1} \not= G$.
        Since $G_{1}$ is $\cz{G}{R}$-invariant
        it follows that $P = \op{p}{G_{1};R}$.
        By induction,
        we may suppose that $V \leq \op{p}{G_{1}}$.
        Then $V$ centralizes $\op{p'}{G/\op{p}{G}}$.
        As $G = \listgen{V^{RG}}$ it follows that
        $G$ centralizes $\sol{G/\op{p}{G}}$,
        whence $\sol{\br{G}} = \zz{\br{G}}$.
        Note that $\zz{\br{G}}$ is a $p'$-group
        and that $\br{P} = \op{p}{\br{G};R}$.

        Put $G^{*} = \br{G}/\sol{\br{G}}$.
        Then $\sol{G^{*}} = 1$ and $G^{*} = \listgen{P^{*G^{*}}}$.
        As $\sol{\br{G}} = \zz{\br{G}}$ it follows from Coprime Action
        that $P^{*} = \op{p}{G^{*};R}$.
        Theorem~\ref{o:1}(d) implies that $P^{*}$
        is an abelian Sylow $p$-subgroup of $G^{*}$.
        Since $\sol{\br{G}}$ is a $p'$-subgroup it follows that $\br{P}$
        is an abelian Sylow $p$-subgroup of $\br{G}$.
        In particular,
        $\br{V}$ is normal in every Sylow $p$-subgroup of $\br{G}$
        in which it is contained.
        As $\op{p}{G} \leq P$ we have $P \in \syl{p}{G}$
        and the claim is established.
    \end{proof}

    Choose $\br{G_{2}}$ minimal subject to \[
        \br{V} \leq \br{G_{2}} \leq \br{G} \quad\mbox{and}\quad \br{V} \not\leq \op{p}{\br{G_{2}}}.
    \]
    Choose $\br{Q} \in \syl{p}{\br{G}}$ with
    $\br{V} \leq \br{Q} \cap \br{G_{2}} \in \syl{p}{\br{G_{2}}}$.
    Claim~\ref{m:2:Claim1} implies $\br{V} \normal \br{P}$
    and $\br{V} \normal \br{Q}$.
    Hence there exists $\br{n} \in \n{\br{G}}{\br{V}}$ with $\br{Q}^{\br{n}} = \br{P}$.
    Replacing $\br{G_{2}}$ with $\br{G_{2}}^{\br{n}}$
    we may suppose that \[
        \br{P} \cap \br{G_{2}} \in \syl{p}{\br{G_{2}}}.
    \]
    Let $G_{2}$ be the full inverse image of $\br{G_{2}}$ in $G$ and let
    $G_{3} = \listgen{V^{G_{2}}} \normal G_{2}$.
    In particular, $V \not\leq \op{p}{G_{3}}$.
    Now $P \cap G_{2} \in \syl{p}{G_{2}}$ whence
    $P \cap G_{3} \in \syl{p}{G_{3}}$.

    \begin{Claim}\label{m:2:Claim2}
        For each $M$ with $V \leq M < G_{3}$, it follows that $V \leq \op{p}{M}$.
    \end{Claim}
    \begin{proof}
        If $\br{M} < \br{G_{2}}$ then this follows from the choice of $G_{2}$,
        so suppose $\br{M} = \br{G_{2}}$.
        Then $G_{2} = \op{p}{G}M$.
        Since $V \leq M$ and $\op{p}{G}$ normalizes $V$ we have
        $\listgen{V^{G_{2}}} \leq M$ so $G_{3} \leq M$,
        a contradiction.
        The claim is established.
    \end{proof}
    Since $V \not\leq \op{p}{G_{3}}$,
    Claim~\ref{m:2:Claim2} and Wielandt's Maximizer Lemma imply that
    $V$ is contained in a unique maximal subgroup $M$ of $G_{3}$.
    Then $V \leq \op{p}{M}$.
    Now $\op{p}{G_{3}} \leq P$ so $\op{p}{G_{3}}$ normalizes $V$.
    Apply \cite[Lemma~3.1]{PFpp} to get the required subgroup $A$.
\end{proof}

Given a group $M$ and a prime $p$,
the subgroup $Y_{M}$ is defined by \[
    Y_{M} = \gen{ \omegaone{\zz{S}} }{ S \in \syl{p}{M} }.
\]
Clearly, $Y_{M} \characteristic M$.
If in addition $M$ has characteristic $p$ then
$Y_{M} \leq \omegaone{\zz{\op{p}{M}}}$ and so $Y_{M}$
may be regarded as a $\gfpring{p}{M}$-module.
A well-known property of $Y_{M}$ is that $\op{p}{M/\cz{M}{Y_{M}}} = 1$,
see \cite{PFpp}.

We are now in a position to prove Theorem~\ref{thmA},
following closely arguments from \cite{PFpp}.
Theorem~\ref{thmA} follows readily from the following
slightly stronger result.

\begin{Theorem}\label{m:3}
    Suppose that $M$ and $S$ are subgroups of the group $G$,
    that $R_{m}$ and $R_{s}$ are elementary abelian groups that act
    coprimely on $M$ and $S$ respectively.
    Assume that:
    \begin{itemize}
        \item   $M$ and $S$ have characteristic $p$ for some prime $p$.

        \item   $\op{p}{M;R_{m}} = \op{p}{S;R_{s}}$.

        \item   $\cz{S}{Y_{M}} \leq M$.

        \item   $M$ and $S$ are $K$-groups.
    \end{itemize}
    Set \[
        P = \op{p}{M}\op{p}{S}.
    \]
    Then the following hold:
    \begin{enumerate}
        \item[(a)]  If $\op{p}{M}$ is abelian then $J(P) = J(\op{p}{M})$.

        \item[(b)]  $J(P) = J(\op{p}{S})$.
    \end{enumerate}
\end{Theorem}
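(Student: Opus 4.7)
The plan is to follow the Thompson-subgroup/offender argument from \cite{PFpp} closely, with the $p$-solvability hypothesis there replaced by Theorem~\ref{m:1} and Lemma~\ref{m:2} of this section. Set $V = Y_{M}$ and $Q = \op{p}{M;R_{m}} = \op{p}{S;R_{s}}$. Because $M$ has characteristic $p$, $V \leq \omegaone{\zz{\op{p}{M}}}$ is an elementary abelian $M$-invariant subgroup of $\op{p}{M}$; in particular, in case (a), $\op{p}{M}$ centralizes $V$. The hypothesis $\cz{S}{V} \leq M$ is what forces the $S$-side to interact with the $M$-structure.

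For part (a), I would argue by contradiction: suppose some abelian subgroup $A$ of $P$ of maximal order satisfies $A \not\leq \op{p}{M}$. The abelian subgroups $\cz{V}{A} \cdot A$ and $V \cdot \cz{A}{V}$ both lie in $P$, and comparing their orders with $\card{A}$ yields first that $\cz{V}{A} = V \cap A$ and then the offender inequality $\card{V/\cz{V}{A}} \leq \card{A/\cz{A}{V}}$. An iterated application of Thompson's replacement theorem, exactly as in \cite{PFpp}, allows me to refine $A$ to an abelian subgroup of $P$ of maximal order that acts quadratically on $V$. I would then invoke Lemma~\ref{m:2} applied to a suitably chosen subgroup $H$ of $G$ containing $V$ and arranged so that $\gfitt{H} = \op{p}{H}$, producing a nearly quadratic $2F$-offender $A^{*} \leq Q$ on a faithful module built from $V$. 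Theorem~\ref{m:1} then supplies the contradiction.

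For part (b), the argument is parallel but less clean, since $\op{p}{S}$ need not be abelian, so one must compare $J(P)$ with $J(\op{p}{S})$ rather than with $\op{p}{S}$ itself. The hypothesis $\cz{S}{V} \leq M$ again permits use of the $M$-side offender machinery on $V = Y_{M}$, and the equality $\op{p}{M;R_{m}} = \op{p}{S;R_{s}}$ is what allows the nearly quadratic $2F$-offender produced on the $M$-side to be reinterpreted as an $R_{s}$-invariant offender on the $S$-side. The principal obstacle is ensuring that the offender ultimately lies inside $Q$, not merely inside $P$: Thompson replacement preserves the ambient $p$-group $P$ but not the smaller $R$-invariant subgroup $Q$ where Theorem~\ref{m:1} forbids nearly quadratic $2F$-offenders. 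The resolution is to choose the ambient group $H$ for Lemma~\ref{m:2} so that the offender it produces lies automatically in $\op{p}{H;R_{m}} \leq Q$, and to exploit the equality $\op{p}{M;R_{m}} = \op{p}{S;R_{s}}$ to bridge the $M$- and $S$-sides throughout.
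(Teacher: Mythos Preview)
Your outline for (a) is close to the paper's argument but overcomplicated: there is no need for Lemma~\ref{m:2} or an auxiliary subgroup $H$. Once Thompson replacement yields $A \in \mathcal{A}(P)$ quadratic on $V$ with $\card{V/\cz{V}{A}} \leq \card{A/\cz{A}{V}}$, observe simply that $A \leq P \leq \op{p}{M;R_{m}}$ and that $V$ is a faithful $\gfpring{p}{R_{m}(M/\cz{M}{V})}$-module with $\op{p}{M/\cz{M}{V}} = 1$; Theorem~\ref{m:1} applied directly to $M/\cz{M}{V}$ gives the contradiction. The paper packages this as a general ``Claim~1'' valid for any elementary abelian characteristic $V \leq M$ with $\op{p}{M/\cz{M}{V}} = 1$, and in particular records that $[Y_{M},J(P)] = 1$.

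For (b) there is a genuine gap. You propose to work with $V = Y_{M}$ throughout, but $Y_{M}$ is not $S$-invariant, so there is no $R_{s}S$-module on which to invoke Theorem~\ref{m:1}; your phrase ``reinterpreted as an $R_{s}$-invariant offender on the $S$-side'' has no content as stated. The paper's resolution is to pass to the $\aut{S}$-closure $W = \listgen{Y_{M}^{\aut{S}}}$. Two nontrivial steps are then required. First, one must show $W$ is still elementary abelian: Lemma~\ref{m:2} applied to $S$ together with Claim~1 on the $M$-side forces $Y_{M} \leq \op{p}{S}$, hence $Y_{M} \normal \op{p}{S}$; then if $[Y_{M},Y_{M}^{a}] \not= 1$ for some $a \in \aut{S}$, applying Claim~1 to $Y_{M}^{a}$ acting on $Y_{M}$ and to $Y_{M}^{a^{-1}}$ acting on $Y_{M}$ yields two strict inequalities which contradict each other after conjugating one by $a$.

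Second, and this is the point your last paragraph gestures at but does not resolve, one must handle a Thompson offender $A \in \mathcal{A}(P)$ quadratic on $W$ that does \emph{not} lie in $Q := \op{p}{S \bmod \cz{S}{W}}$. A Frattini argument using $\cz{S}{W} \leq \cz{S}{Y_{M}} \leq M$ gives $P \cap Q \leq \op{p}{S}$, so if $A \leq Q$ one is done; but in general only $A_{0} := A \cap Q < A$. Here the paper invokes the parameter $q = q(Y_{M},\op{p}{S})$, which satisfies $q > 2$ by Claim~1, and combines $\card{A_{0}/\cz{A_{0}}{W}}^{q} \leq \card{W/\cz{W}{A_{0}}}$ with the Thompson inequality $\card{A/\cz{A}{W}} \geq \card{W/\cz{W}{A}}$ to deduce $\card{A/A_{0}}^{2} > \card{W/\cz{W}{A}}$. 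This makes $A/\cz{A}{W}$ a nearly quadratic $2F$-offender for $S/\cz{S}{W}$ on $W$, and now Theorem~\ref{m:1} applies. Your proposal to ``choose $H$ so the offender lies automatically in $\op{p}{H;R_{m}}$'' does not achieve this: Thompson replacement only guarantees $A \leq P$, and nothing short of the $q$-argument forces the offender into $Q$.
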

\begin{proof}
    Note that $\op{p}{M} \leq \op{p}{M;R_{m}} = \op{p}{S;R_{s}} \leq S$
    and similarly $\op{p}{S} \leq M$.
    Then $\op{p}{M}$ and $\op{p}{S}$ normalize each other and $P$ is a subgroup.

    \setcounter{Claim}{0}
    \begin{Claim}\label{m:3:Claim1}
        Suppose $V$ is an elementary abelian characteristic $p$-subgroup of $M$
        with $\op{p}{M/\cz{M}{V}} = 1$.
        If $A \leq \op{p}{M;R_{m}}$ acts nontrivially and nearly quadratically
        on $V$ then \[
            \card{A/\cz{A}{V}}^{2} < \card{V/\cz{V}{A}}.
        \]
        Moreover $[V, J(P)] = 1$.
    \end{Claim}
    \begin{proof}
        Set $\br{M} = M/\cz{M}{V}$,
        so $\op{p}{M} = 1$.
        If the inequality is violated then $\br{A}$
        is a nearly quadratic $2F$-offender for $\br{M}$ on $V$.
        But $\br{A} \leq \op{p}{\br{M};R_{m}}$ so Theorem~\ref{m:1}
        supplies a contradiction.

        Suppose that $[V, J(P)] \not= 1$.
        Thompson's Replacement Theorem, see \cite[Theorem~2.4]{PFpp},
        implies there exists $A \in \mathcal A(P)$ with $A$ acting nontrivially
        and quadratically on $V$.
        Since $A \in \mathcal A(P)$ we have
        $\card{V/\cz{V}{A}} \leq \card{A/\cz{A}{V}}$,
        which contradicts the inequality.
    \end{proof}

    \begin{proof}[Proof of (a)]
        Assume that $\op{p}{M}$ is abelian and put $V = \omegaone{\op{p}{M}}$.
        By Coprime Action,
        $\oupper{p}{\cz{M}{V}} \leq \cz{M}{\op{p}{M}} = \op{p}{M}$
        whence $\cz{M}{V} = \op{p}{M}$ and $\op{p}{M/\cz{M}{V}} = 1$.
        By Claim~\ref{m:2:Claim1},
        $J(P) \leq \cz{M}{V} = \op{p}{M} \leq P$
        whence $J(P) = J(\op{p}{M})$.
    \end{proof}

    Put \[
        W = \listgen{ Y_{M}^{\aut{S}} }.
    \]

    \begin{Claim}\label{m:3:Claim2}
        $Y_{M} \normal \op{p}{S}$ and $W$ is elementary abelian.
    \end{Claim}
    \begin{proof}
        Put $V = Y_{M}$.
        Then $\op{p}{M/\cz{M}{V}} = 1$ by \cite[Lemma~2.2]{PFpp}.
        Now $V \normal \op{p}{M;R_{m}} = \op{p}{S;R_{s}}$.
        Lemma~\ref{m:2},
        with $S$ in the role of $G$,
        and Claim~\ref{m:3:Claim1} imply $V \leq \op{p}{S}$.
        Since $\op{p}{S} \leq \op{p}{S;R_{s}}$,
        we have $V \normal \op{p}{S}$.

        Suppose that $W$ is nonabelian.
        Then there exists $a \in \aut{S}$ with $[V,V^{a}] \not= 1$.
        Also $[V,V^{a},V^{a}] \leq [V \cap V^{a},V^{a}] = 1$.
        In particular,
        $V^{a}$ acts nontrivially and quadratically on $V$.
        Claim~\ref{m:3:Claim1} implies that \[
            \card{V^{a}/\cz{V^{a}}{V}} < \card{V/\cz{V}{V^{a}}}.
        \]
        Moreover, $[V^{a^{-1}},V] \not= 1$ whence \[
            \card{V^{a^{-1}}/\cz{V^{a^{-1}}}{V}} < \card{V/\cz{V}{V^{a^{-1}}}}.
        \]
        Conjugating the second inequality by $a$ contradicts the first.
    \end{proof}

    Let \[
        Q = \op{p}{S \bmod \cz{S}{W}}.
    \]

    \begin{Claim}\label{m:3:Claim3}
        $P \cap Q \leq \op{p}{S}$.
    \end{Claim}
    \begin{proof}
        Choose $T$ with $P \cap Q \leq T \in \syl{p}{Q}$.
        Then $S = Q\n{S}{T} = \cz{S}{W}\n{S}{T} = (M \cap S)\n{S}{T}$
        because $\cz{S}{W} \leq \cz{S}{Y_{M}} \leq M$.
        Now $P \cap Q$ is normalized by $M \cap S$ because
        $P = \op{p}{M}\op{p}{S} \normal M \cap S$,
        whence $\listgen{ (P \cap Q)^{S} } = \listgen{ (P\cap Q)^{\n{S}{T}} } \leq T$
        and so $P \cap Q \leq \op{p}{S}$.
    \end{proof}

    \begin{Claim}\label{m:3:Claim4}
        $[W,J(P)] = 1$.
    \end{Claim}
    \begin{proof}
        Assume false.
        By the Replacement Theorem,
        there exists $A \in \mathcal A(P)$ such that $A$
        acts nontrivially and quadratically on $W$.

        Suppose that $A \leq Q$.
        Claim~\ref{m:3:Claim3} implies that $A \leq \op{p}{S}$
        whence $A \leq J(\op{p}{S})$.
        Since $A \in \mathcal A(P)$ it follows that $J(\op{p}{S}) \leq J(P)$.
        By Claim~\ref{m:3:Claim1},
        $[Y_{M},J(P)] = 1$ whence $[Y_{M},J(\op{p}{S})] = 1$.
        As $W = \listgen{ Y_{M}^{\aut{S}} }$ we obtain $[W, J(\op{p}{S})] = 1$
        and then $[W,A] = 1$,
        a contradiction.
        We deduce that $A \not\leq Q$.

        Set \[
            A_{0} = A \cap Q.
        \]
        By Claim~\ref{m:3:Claim3},
        $A_{0} \leq \op{p}{S}$ and $\cz{A}{W} \leq \op{p}{S} \leq Q$ so \[
            \cz{A}{W} = \cz{A_{0}}{W}.
        \]
        Let $q = q(Y_{M},\op{p}{S})$.
        The reader is referred to \cite{PFpp} or \cite{KS} for the definition of this parameter.
        Since $\op{p}{S} \leq \op{p}{S;R_{s}} = \op{p}{M;R_{m}}$,
        Claim~\ref{m:3:Claim1} implies \[
            q > 2.
        \]
        Now $[Y_{M},A_{0},A_{0}] = 1$ because $A$ is quadratic on $W$.
        Applying \cite[Lemma~2.5]{PFpp},
        we obtain
        \begin{equation}\tag{$1$}
            \card{A_{0}/\cz{A_{0}}{W}}^{q} \leq \card{W/\cz{W}{A_{0}}}.
        \end{equation}
        As $A \in \mathcal A(P)$ we have $\card{A} \geq \card{W\cz{A}{W}}$ so
        \begin{equation}\tag{$2$}
            \card{A/\cz{A}{W}} \geq \card{W/\cz{W}{A}}.
        \end{equation}
        Raising $(2)$ to the power $q$,
        dividing by $(1)$ and using $\cz{A}{W} = \cz{A_{0}}{W}$ yields
        \begin{align*}
            \card{A/A_{0}}^{q} &\geq \card{W/\cz{W}{A}}^{q-1}\card{\cz{W}{A_{0}}/\cz{W}{A}} \\
                                &\geq \card{W/\cz{W}{A}}^{q-1}
        \end{align*}
        because $A_{0} \leq A$.
        Then \[
            \card{A/A_{0}}^{1+1/(q-1)} \geq \card{W/\cz{W}{A}}.
        \]
        Since $q>2$ and $A_{0} < A$ we obtain \[
            \card{A/A_{0}}^{2} > \card{W/\cz{W}{A}}.
        \]
        Thus $A/\cz{A}{W}$ is a nearly quadratic $2F$-offender
        for $S/\cz{S}{W}$ on $W$.
        Since $A \leq \op{p}{S;R_{s}}$,
        Theorem~\ref{m:1} supplies a contradiction.
        Hence $[W,J(P)] = 1$.
    \end{proof}

    Claims~\ref{m:3:Claim3} and \ref{m:3:Claim4} imply $J(P) \leq \op{p}{S}$.
    Since $\op{p}{S} \leq P$ it follows that $J(P) = J(\op{p}{S})$
    and the proof is complete.
\end{proof}

\begin{proof}[Proof of Theorem~\ref{thmA}]
    Assume false.
    Two applications of the previous theorem yield
    $J(\op{p}{M_{1}}) = J(\op{p}{M_{1}}\op{p}{M_{2}}) = J(\op{p}{M_{2}})$.
    Since $J(\op{p}{M_{i}}) \normal M_{i}$,
    this gives a contradiction since no nontrivial subgroup of
    $M_{1} \cap M_{2}$ can be normal in both $M_{1}$ and $M_{2}$
    by the definition of a weak primitive pair.
\end{proof}

\bibliographystyle{amsplain}

\begin{thebibliography}{99}


    \bibitem{PFpp}  P. Flavell,
                    {\em Primitive pairs of $p$-solvable groups,}
                    J. Algebra {\bf 324} (2010) 841--859

    \bibitem{PFsft} P. Flavell,
                    {\em A new proof of the Solvable Signalizer Functor Theorem,}
                    J. Algebra {\bf 398} (2014) 350-363

    \bibitem{I}     P. Flavell,
                    {\em Automorphisms of $K$-groups I,}
                    Preprint http://arxiv.org/abs/1609.01969


    \bibitem{II}    P. Flavell,
                    {\em Automorphisms of $K$-groups II,}
                    Preprint http://arxiv.org/abs/1609.02380


    \bibitem{PFezj} P. Flavell,
                    {\em An equivariant analogue of Glauberman's $ZJ$-Theorem,}
                    J. Algebra {\bf 257}(2) (2002) 249--264

    \bibitem{GG}    G. Glauberman,
                    {\em Prime power factor groups of finite groups II,}
                    Math. Z. {\bf 117}(1970) 46--56

    \bibitem{GM}    R.M. Guralnick and G. Malle,
                    {\em Classification of $2F$-modules, I,}
                    J. Algebra {\bf 257} (2002) 348--372

    \bibitem{KS}    H. Kurzweil and B. Stellmacher,
                    {\em The theory of finite groups. An introduction.}
                    Universitext, Springer-Verlag, New York 2004.

    \bibitem{MS1}    U. Meierfrankenfeld and B. Stellmacher,
                    {\em $F$-stability in finite groups,}
                    Trans. Amer. Math. Soc. {\bf 361} (2009), no. 5, 2509--2525.

    \bibitem{MS2}   U. Meierfrankenfeld and B. Stellmacher,
                    {\em Nearly quadratic modules,}
                     J. Algebra {\bf 319} no.\ 11 (2008), 4798--4843.

\end{thebibliography}

\end{document}